\documentclass[11pt]{article}

\usepackage[a4paper,margin=25mm]{geometry}
\usepackage[T1]{fontenc}
\usepackage[utf8]{inputenc}
\usepackage{lmodern}
\usepackage{amsmath,amssymb,amsthm,mathtools}
\usepackage{booktabs,array,enumitem,microtype}
\usepackage{xcolor}
\usepackage{authblk}
\usepackage{float}

\definecolor{numbercolor}{RGB}{0,80,150}

\usepackage[
colorlinks=true,
linkcolor=numbercolor,
citecolor=green!40!black,
urlcolor=violet!70!black
]{hyperref}

\usepackage[noabbrev]{cleveref}

\makeatletter

\renewcommand{\@author}{%
	\def\rlap##1{##1}%
	\AB@authlist
}

\renewcommand{\AB@affilnote}[1]{%
	\textsuperscript{\normalfont#1}\,%
}

\newcommand{\makefirstpageinfo}[1]{%
	\begingroup
	\renewcommand{\thefootnote}{}%
	\renewcommand{\@makefntext}[1]{\noindent##1}%
	\footnotetext[0]{%
		\begingroup
		\let\\\par
		\AB@affillist\par
		\endgroup
		\smallskip
		\textit{2020 Mathematics Subject Classification.} #1%
	}%
	\endgroup
}

\makeatother

\newtheorem{theorem}{Theorem}[section]

\newtheorem{lemma}[theorem]{Lemma}
\newtheorem{corollary}[theorem]{Corollary}

\newtheorem{remark}[theorem]{Remark}

\crefname{theorem}{theorem}{theorems}
\Crefname{theorem}{Theorem}{Theorems}
\crefname{proposition}{proposition}{propositions}
\Crefname{proposition}{Proposition}{Propositions}
\crefname{lemma}{lemma}{lemmas}
\Crefname{lemma}{Lemma}{Lemmas}
\crefname{corollary}{corollary}{corollaries}
\Crefname{corollary}{Corollary}{Corollaries}
\crefname{definition}{definition}{definitions}
\Crefname{definition}{Definition}{Definitions}
\crefname{remark}{remark}{remarks}
\Crefname{remark}{Remark}{Remarks}
\newtheorem{question}[theorem]{Question}

\newcommand{\cc}{\mathbf{c}}
\newcommand{\e}{\mathbf{e}}
\newcommand{\zz}{\mathbf{z}}
\newcommand{\y}{\mathbf{y}}
\newcommand{\w}{\mathbf{w}}
\newcommand{\x}{\mathbf{x}}
\newcommand{\q}{\mathbf{q}}
\newcommand{\p}{\mathbf{p}}
\newcommand{\vv}{\mathbf{v}}
\newcommand{\uu}{\mathbf{u}}
\newcommand{\dd}{\mathbf{d}}
\newcommand{\F}{\mathbb F_3}
\newcommand{\wt}{\operatorname{wt}}

\newcommand{\rank}{\operatorname{rank}}
\newcommand{\Min}{\operatorname{Min}}

\newcommand{\Zcyc}[1]{\mathbb Z/#1\mathbb Z}
\title{Irreducible rootless unimodular lattices
	generated\\ by vectors of norm $3$}

\author[a]{Hong-Jun Ge\thanks{%
		Corresponding author.
		Email:
		\href{mailto:gehj22@mail.ustc.edu.cn}
		{\nolinkurl{gehj22@mail.ustc.edu.cn}}.}}
\author[a,b]{Jack H. Koolen}
\author[a]{Jing-Yuan Liu}
\author[c]{Kiyoto Yoshino}

\affil[a]{%
	School of Mathematical Sciences,
	University of Science and Technology of China,
	Hefei, 230026, People's Republic of China
}

\affil[b]{%
	CAS Wu Wen-Tsun Key Laboratory of Mathematics,
	University of Science and Technology of China,
	Hefei, 230026, People's Republic of China
}

\affil[c]{%
	Department of Information Science, Faculty of Science, 
	Toho University, 
	2-2-1 Miyama, Funabashi, Chiba 274-8510, Japan
}

\date{\today}

\begin{document}

\maketitle

\makefirstpageinfo{%
	Primary 11H06;
	Secondary 11H71, 94B05, 05C22, 05C50.%
}

\begin{abstract}
	The classical ADE classification implies that $E_8$
	is the unique irreducible unimodular Euclidean lattice generated
	by vectors of norm $2$.
	In sharp contrast, for every positive integer $m$, we construct an
	irreducible odd unimodular lattice of rank $24m$
	and minimum norm $3$ that is generated by its minimal vectors.
\end{abstract}

\section{Introduction}
Throughout, all lattices are Euclidean, and the norm of a lattice
vector is its squared Euclidean length.
Generation of lattices is always understood over $\mathbb Z$.

The classical ADE classification of root lattices states that every irreducible integral lattice generated by vectors of
norm $2$ is isometric to one of
$A_n\ (n\ge1)$, $D_n\ (n\ge4)$, $E_6$, $E_7$, and $E_8$;
see \cite{Cartan1894,Witt}.
Consequently, up to isometry, $E_8$ is the unique
irreducible unimodular lattice generated by vectors of norm $2$.

In this note, we show that the situation changes sharply at minimum norm \(3\).
Even under the same requirements of irreducibility, unimodularity, and generation by minimal vectors, the rank is no longer bounded.
More precisely, our main
result is the following.

\begin{theorem}\label{thm:main lattice}
	For every positive integer $m$, there exists an irreducible odd
	unimodular lattice $L_m$ of rank $24m$ and minimum norm $3$
	that is $3$-integrable and generated by its minimal vectors.
\end{theorem}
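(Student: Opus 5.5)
Our plan is a ternary-code construction (Construction A over $\F$), which the macros $\F$ and $\wt$ in the preamble also suggest. Take a self-dual ternary code $C\subseteq\F^{n}$ with $n=24m$ and minimum weight $\ge 3$. Set
\[
L=\tfrac{1}{\sqrt3}\{\x\in\mathbb Z^{n}:\ \x \bmod 3\in C\}.
\]
Since $C$ is self-dual, $L$ is integral and $\det L=3^{-n}\cdot 3^{2(n-|C|_{\log_3})}=1$, so $L$ is unimodular. The norm of $\tfrac1{\sqrt3}\x$ is $\|\x\|^2/3$. A nonzero vector in $3\mathbb Z^n$ has norm at least $9/3=3$, attained by $\pm3\e_i/\sqrt3$. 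A vector reducing to a nonzero codeword of weight $w$ has norm at least $w/3$. So the minimum is $3$ exactly when $d(C)\ge 9$, with codewords of weight $9$ giving minimal vectors as well. Moreover $\sqrt3 L\subseteq\mathbb Z^n$, and this is what ``$3$-integrable'' means. The lattice is odd because $\sqrt3\e_i\in L$ has norm $3$.

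The concrete choice is therefore a self-dual ternary code of length $24m$ with minimum weight $\ge9$. For $m=1$ we use the extended quadratic residue code or the Pless symmetry code of length $24$, which has $d=9$. For general $m$ the direct sum $C_{24}^{\oplus m}$ has the right minimum weight, but it yields a decomposable lattice. We must therefore glue. One option is to take a self-dual code of length $24m$ with $d\ge9$ whose minimum-weight words connect all coordinates; such codes exist by a mass-formula / Gilbert–Varshamov argument only for small $m$, so this needs care. A more robust option is to modify $C_{24}^{\oplus m}$ by a chain of ``neighbor'' steps: replace $C$ by $\langle C\cap \vv^\perp,\vv\rangle$ for a suitable isotropic $\vv$ whose support meets two consecutive blocks. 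We would choose $\vv$ of weight $\equiv0 \pmod 3$, supported in two adjacent blocks, so that no new words of weight $<9$ appear. The latter is checked via the weight enumerator of the Golay-type code restricted to coordinates.

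Next comes generation by minimal vectors. The norm-$3$ vectors are the $\pm\sqrt3\e_i$ together with the vectors $\tfrac1{\sqrt3}\cc$ for weight-$9$ codewords $\cc$ lifted with entries in $\{0,\pm1\}$. The sublattice they generate contains $\sqrt3\mathbb Z^n$ and maps onto the subcode of $C$ spanned by weight-$9$ words. Hence it suffices that the weight-$9$ codewords span $C$. For the length-$24$ symmetry code this is a finite check, since the minimum words span a subcode invariant under the automatic group, and that group acts irreducibly enough. For the glued codes we would show that the new generator $\vv$ can be taken of weight $9$ or written as a sum of weight-$9$ words.

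Finally, irreducibility. If $L=L_1\perp L_2$, every minimal vector lies in one summand. Define a graph on the norm-$3$ vectors, joining two of them when their inner product is nonzero. Since these vectors generate $L$, connectivity of this graph forces $L$ to be irreducible. Each $\sqrt3\e_i$ is adjacent to every weight-$9$ word with $i$ in its support. Irreducibility therefore reduces to the supports of the weight-$9$ words forming a connected hypergraph on $\{1,\dots,24m\}$, and the gluing is designed precisely to make this so.

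The main obstacle is the gluing step for general $m$. It requires producing codes of length $24m$ that simultaneously keep $d\ge9$ and have minimum words spanning the code and linking all blocks. The neighbor construction is what we would try first, and the delicate point is verifying that it introduces no words of weight $3$ or $6$.
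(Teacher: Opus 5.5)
Your reduction is correct and essentially the paper's. Construction~A on a self-dual ternary code of length $24m$ with minimum weight exactly $9$ gives an odd, unimodular, $3$-integrable lattice of minimum norm $3$. That lattice is generated by minimal vectors once the weight-$9$ words span the code, and it is irreducible once their supports are connected. The gap is that the whole difficulty of the theorem, producing such codes for every $m$, is left open.

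\textbf{Gluing and minimum distance.} You never specify the gluing vector $\vv$. Taking it isotropic only makes every weight in the neighbor divisible by $3$; it does not exclude new words of weight $3$ or $6$. Also, after the first step your neighbors are formed inside a code that is no longer $C^{\oplus m}$, so there is no ready-made weight enumerator to check against. The paper supplies exactly this piece:
\begin{itemize}[label={}]
\item it fixes a weight-$9$ word $\w=\mathbf 1_S$ and splits $S=P\sqcup Q$ with $|P|=4$, $|Q|=5$;
\item it sets $\p=-\mathbf 1_P$, $\q=-\mathbf 1_Q$, $D=C\cap\p^\perp$;
\item it takes $C_m=D^{\oplus m}+\langle \p^{(i)}+\q^{(i+1)} : i\in\Zcyc m\rangle$.
\end{itemize}
This $C_m$ is in fact what your scheme yields from $C^{\oplus m}$ after $m-1$ successive neighbor steps with $\vv=\p^{(i)}+\q^{(i+1)}$, $i=0,\dots,m-2$. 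So your framework is compatible with the paper's, but the content lies in this choice and its verification. The paper proves $d(C_m)=9$ all at once using the blockwise bounds $\mu(a,b)$ of Lemma~\ref{lem:local}:
\begin{itemize}[label={}]
\item cost at least $4$ to enter a nonzero run of bridge coefficients;
\item cost at least $5$ to leave it;
\item cost at least $6$ or $9$ for a nonzero-to-nonzero transition.
\end{itemize}
These costs are summed around the cyclic coefficient sequence.

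\textbf{Generation.} Your plan here also has a hole. Showing that the new generator $\vv$ has weight $9$ is not enough. The retained codimension-one subcode $C'\cap\vv^\perp$ must itself be spanned by weight-$9$ words, and passing to such a subcode can destroy that property. In the paper this is Theorem~\ref{thm:seed}(ii), $D=\langle D_9\rangle$, and it is the hardest step:
\begin{itemize}[label={}]
\item counts in the $5$-$(24,9,6)$ design show that $\langle D_9\rangle$ has full support;
\item the MacWilliams identities, using that $\pm\p$ are the only nonzero words of weight at most $4$ in $D^\perp$, give $|D_9|\ge 1360$;
\item a Delsarte linear-programming bound shows that every full-support subcode of $C$ of dimension at most $10$ has fewer than $1360$ words of weight $9$.
\end{itemize}

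\textbf{Connectivity and a side remark.} Connectivity of the supports needs its own argument too; the paper uses the Singleton bound on $D$, with the bridges linking consecutive blocks. Your mass-formula remark points the wrong way. For large length most self-dual codes have minimum weight far above $9$, hence no weight-$9$ words at all. The real constraint is $d=9$ together with spanning by weight-$9$ words, not merely $d\ge 9$.
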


Classical examples include the shorter Leech, odd Leech, and
Conway--Borcherds lattices of ranks $23$, $24$, and $26$, respectively
\cite{Borcherds1984,ConwaySloane}.
The irreducible unimodular lattices of minimum norm $3$ in ranks
$27$ and $28$ were classified by Bacher and
Venkov~\cite{BacherVenkov2001}, and those of rank $29$ by
Allombert and Chenevier~\cite{AllombertChenevier2025}.
All these lattices are generated by their minimal vectors;
see \cite{MartinetBatut} and
\cite[Example~4.3]{AllombertChenevier2025}.
This property of generation is not automatic for unimodular lattices of
minimum norm $3$. For example,
Chenevier~\cite[Proposition~8.2]{Chenevier2025}
gives a family of such lattices whose minimal vectors generate a proper
finite-index sublattice in sufficiently large ranks.

Our construction cyclically glues copies of an index-$3$ sublattice
of the odd Leech lattice.
The constituent sublattices are generated by vectors of norm $3$,
and the added gluing vectors have the same norm.
The key point is that this gluing introduces no vectors of smaller norm
and produces an irreducible lattice for every $m\ge1$.
We establish these properties using codimension-one subcodes of
extremal ternary self-dual codes of length $24$.

As an application, we answer a question of Cao, Koolen, Liu, and
Yang~\cite[Problem~1.9]{CKLY} affirmatively.

\begin{corollary}\label{cor:main}
	The lattices $L_m$ yield pairwise switching-inequivalent connected
	non-extendable signed graphs $\Gamma_m$ with smallest eigenvalue $-3$.
\end{corollary}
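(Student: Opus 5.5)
The plan is to pass from lattices to signed graphs through the standard Gram-matrix correspondence. For each $m$, choose a basis-free description: take a set $S_m$ of minimal vectors of $L_m$ containing exactly one of each pair $\pm\vv$. Define $\Gamma_m$ on vertex set $S_m$ by joining $\vv,\vv'$ with sign $(\vv,\vv')$ whenever $(\vv,\vv')=\pm1$. Since $L_m$ is $3$-integrable (equivalently, inner products of minimal vectors lie in $\{0,\pm1,\pm3\}$ up to the usual normalization), the Gram matrix of $S_m$ has the form $3I+A(\Gamma_m)$. Here $A(\Gamma_m)$ is the signed adjacency matrix, so $A+3I$ is positive semidefinite and $\lambda_{\min}(\Gamma_m)\ge -3$. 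Equality holds because $|S_m|$ exceeds $\rank L_m=24m$ (there are many more minimal vectors than the rank), so the Gram matrix is singular. Different choices of representatives from $\pm$ pairs change $\Gamma_m$ only by switching, so its switching class is well defined.

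Next come connectivity and pairwise inequivalence. The switching class of $\Gamma_m$ determines, via $3I+A$, the Gram matrix of $S_m$ up to signs. Hence it determines the lattice spanned by $S_m$ up to isometry, and that lattice is $L_m$ because $L_m$ is generated by its minimal vectors. Since $\rank L_m=24m$ differs for different $m$, the graphs $\Gamma_m$ are pairwise switching-inequivalent. For connectivity: if $\Gamma_m$ split into two components $S'\sqcup S''$, these would be mutually orthogonal. Their spans would then give an orthogonal decomposition of $L_m=\langle S_m\rangle$, contradicting the irreducibility in \cref{thm:main lattice}.

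The main step is non-extendability. In the sense of \cite{CKLY}, this means $\Gamma_m$ is not a proper induced subgraph of a connected signed graph with smallest eigenvalue $\ge -3$ realized in the same way. Suppose $\Gamma_m$ extends by a vertex $w$. Then the Gram matrix $3I+A$ of the larger graph is positive semidefinite, so we can realize it by vectors in some Euclidean space: the old vertices give $S_m$ spanning $L_m\otimes\mathbb R$, and $w$ gives a vector $\w$ of norm $3$ with $(\w,\vv)\in\{0,\pm1\}$ for all $\vv\in S_m$. Decompose $\w=\w_1+\w_2$ with $\w_1\in L_m\otimes\mathbb R$ and $\w_2$ orthogonal to it. Then $\w_1$ pairs integrally with all of $S_m$, hence with all of $L_m$, so $\w_1\in L_m^*=L_m$ by unimodularity. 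Now $\|\w_1\|\le3$, and $L_m$ has minimum $3$. Either $\w_1=0$, so $w$ is orthogonal to every old vertex and the extended graph is disconnected (or the extension is not connected, which is excluded). Or $\|\w_1\|=3$, forcing $\w_2=0$ and $\w=\w_1\in\Min(L_m)$, that is $\w=\pm\vv$ for some $\vv\in S_m$. In that case $(\w,\vv)=\pm3$, which is not an allowed entry, contradicting that the extended graph is simple. So no extension exists.

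I expect the delicate point to be matching exactly the definition of ``non-extendable'' used in \cite{CKLY}, including whether multiple vertices may be added at once. Adding several vertices reduces to the one-vertex case, because each added vertex adjacent to the old part individually must satisfy the argument above. The other item to check carefully is the convention that $3$-integrable makes all minimal-vector inner products lie in $\{0,\pm1,\pm3\}$. Once these are confirmed, the remaining steps are routine.
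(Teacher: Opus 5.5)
Your overall route is the paper's: one representative per antipodal pair of $\Min(L_m)$, $\operatorname{Gram}(S_m)=3I+A(\Gamma_m)$ of rank $24m$, singularity from $|S_m|>24m$, switching for the choice of representatives, connectivity from irreducibility, and a rank invariant for inequivalence. The non-extendability step, with $\w_1\in L_m^*=L_m$ and $3\le\w_1\cdot\w_1\le\w\cdot\w=3$, matches the paper's projection argument essentially line by line.

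The one step that is wrong as written is your justification that $\operatorname{Gram}(S_m)$ has the form $3I+A(\Gamma_m)$. $3$-integrability is not equivalent to, and does not imply, that inner products of minimal vectors lie in $\{0,\pm1,\pm3\}$. It says that $\sqrt3\,L_m$ embeds in some $\mathbb Z^N$. For inner products this gives only $3(\uu\cdot\vv)\in\mathbb Z$, which is weaker than the integrality you already have from unimodularity. Integrality and Cauchy--Schwarz still allow $\uu\cdot\vv=\pm2$ for distinct $\uu,\vv\in S_m$. Such entries would make the Gram matrix fail to be $3I$ plus a signed adjacency matrix, and your graph, which only records inner products $\pm1$, would not encode it.

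The missing input is the minimum norm. For distinct $\uu,\vv\in S_m$, the vectors $\uu\pm\vv$ are nonzero lattice vectors. Hence $3\le(\uu\pm\vv)\cdot(\uu\pm\vv)=6\pm2\,\uu\cdot\vv$, so $|\uu\cdot\vv|\le 3/2$ and $\uu\cdot\vv\in\{0,\pm1\}$. The value $\pm3$ never occurs off the diagonal, since it would force $\uu=\pm\vv$.

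Two smaller points. First, ``many more minimal vectors than the rank'' should be backed by a concrete count: the $24m$ pairs $\pm\sqrt3\,\e_j$ plus at least one pair from the lift of a weight-$9$ word of $C_m$. Second, your reduction of a multi-vertex extension to one vertex is valid. Choose $w$ adjacent to an old vertex; the induced subgraph on $S_m\cup\{w\}$ is then connected. A principal submatrix of the positive semidefinite matrix $3I+A(\Gamma')$ is positive semidefinite. The paper sidesteps this reduction by taking a Gram representation of all of $\Gamma'$ at once.
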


\section{Preliminaries}

A code $C$ over $\mathbb F_3$ is called \emph{ternary}.
All codes in this paper are ternary linear codes.
The \emph{weight} $\wt(\cc)$ of a word $\cc\in\F^n$ is the number of its nonzero coordinates.
A linear code of length $n$, dimension $k$, and minimum nonzero
weight $d$ is called an $[n,k,d]$ code.
For a linear code $C$ of length $n$, we write
\[
	A_i(C):=\bigl|\{\cc\in C:\wt(\cc)=i\}\bigr|
	\qquad (0\leq i\leq n).
\]
A code $C$ of length $n$ is said to be \emph{self-dual} if
$C=C^\perp$, where the dual code $C^\perp$ of $C$ is defined as
$C^\perp
	=
	\{\x\in\mathbb F_3^n \mid \x\cdot \y=0
	\text{ for all }\y\in C\}$.
It was shown in~\cite{MallowsSloane1973} that the minimum weight $d$ of a self-dual
code of length $n$ is bounded by $d\le 3\left\lfloor\frac{n}{12}\right\rfloor+3$.
If $d= 3\left\lfloor\frac{n}{12}\right\rfloor+3$, then the code is called \emph{extremal}.
Two codes $C$ and $C'$ are \emph{equivalent} if there exists a
monomial matrix $P$ with
$C'=C\cdot P=\{xP\mid x\in C\}$, and \emph{inequivalent} otherwise.

Leon, Pless, and Sloane~\cite[Theorem~6]{LeonPlessSloane} showed that there are precisely two equivalence classes of
extremal ternary self-dual codes of length $24$, represented by the extended
ternary quadratic-residue code $Q_{24}$ and the Pless symmetry code $P_{24}$.

The distinct supports of the weight-$9$ codewords of $Q_{24}$ and
$P_{24}$ form two nonisomorphic $5$-$(24,9,6)$ designs
\cite{AssmusMattson,Pless1972}.
Consequently, by the above classification, the distinct supports of
the weight-$9$ codewords of every extremal ternary self-dual
$[24,12,9]$ code form a $5$-$(24,9,6)$ design.
For $0\leq j\leq5$, let $\lambda_j$ denote the number of blocks
containing a fixed $j$-subset of the coordinate set.
The derived parameters are
\begin{equation}\label{eq:lambdas}
	(\lambda_0,\lambda_1,\lambda_2,\lambda_3,\lambda_4,\lambda_5)
	=(2024,759,264,84,24,6).
\end{equation}

Throughout, inner products of codewords are taken over $\F$, and
we use $2=-1$. If $\cc\in\F^n$, its \emph{canonical integral lift}
$\widetilde \cc\in\{0,\pm1\}^n$ is obtained by replacing the symbol $2$ by
$-1$.

For later use, let
\begin{equation}\label{eq:krawtchouk}
	K_j(i):=\sum_{\ell=0}^j(-1)^\ell2^{j-\ell}
	\binom{i}{\ell}\binom{24-i}{j-\ell}
\end{equation}
be the ternary Krawtchouk polynomials of length $24$.
If $E$ is a ternary
$[24,r]$ code, $A_i=A_i(E)$, and $B_j=A_j(E^\perp)$, then
\begin{equation}\label{eq:macwilliams}
	\sum_{i=0}^{24}A_iK_j(i)=3^rB_j.
\end{equation}
This is the Krawtchouk form of the MacWilliams identities; see
\cite[Chapter~7]{HuffmanPless}.

An $n$-dimensional lattice $L$ is \emph{integral} if
$L\subseteq L^*$, where the dual lattice $L^*$ is defined as
$L^*
	=
	\{\x\in\mathbb R^n \mid \x\cdot \y\in\mathbb Z
	\text{ for all }\y\in L\}$.
A lattice $L$ with $L=L^*$ is called \emph{unimodular}.
A unimodular lattice $L$ is \emph{even} if all vectors of $L$
have even norms, and \emph{odd} if some vector has an odd norm.
A lattice $L$ is called \emph{$s$-integrable} if $\sqrt{s}\,L$ is isometric
to a sublattice of $\mathbb Z^N$ for some positive integer $N$.
A lattice is \emph{irreducible} if it admits no orthogonal decomposition into two nonzero sublattices.
A \emph{root} is a vector of norm $2$, and a lattice is \emph{rootless} if it contains no roots.
We write $\Min(L)$ for the set of vectors attaining the minimum
nonzero norm of $L$.
Generation by minimal vectors means
$L=\langle\Min(L)\rangle_{\mathbb Z}$.

For a ternary linear code $C$ of length $n$, its
\emph{Construction~A lattice} is
\begin{equation}\label{eq:defLm}
	\mathcal{A}_3(C)
	:=
	\frac1{\sqrt3}
	\{\zz\in\mathbb Z^n:\zz\bmod3\in C\}.
\end{equation}
If $C$ is self-dual and has minimum weight $d$, then
$\mathcal{A}_3(C)$ is unimodular and $3$-integrable with minimum norm
$\min\{3,d/3\}$; see \cite[Section~2]{HaradaMunemasa}.
In particular, $\mathcal{A}_3(P_{24})$ and
$\mathcal{A}_3(Q_{24})$ are both isometric to the odd Leech lattice
$\mathcal O_{24}$, the unique odd unimodular
lattice of rank $24$ and minimum norm $3$;
see \cite[Chapter~17]{ConwaySloane}.

\begin{lemma}[{\cite[Lemma~5]{HaradaMunemasa}}]\label{lem:HM}
	Let $C$ be a ternary self-dual code of minimum weight at
	least $6$.
	Then $C$ is decomposable if and only if $\mathcal{A}_3(C)$ is
	decomposable.
\end{lemma}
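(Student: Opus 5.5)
The plan is to prove both directions of Lemma~\ref{lem:HM} by comparing orthogonal decompositions of the code with those of the lattice. The key input is the minimum-weight hypothesis $d\ge 6$: it forces the minimal vectors of $\mathcal A_3(C)$ to be the vectors $\sqrt3\,\e_i$ (together with $\widetilde\cc/\sqrt3$ for weight-$9$ words if $d\geq 9$). In any case, it forces every vector of norm at most $2$ to be absent except in controlled forms, so that the coordinate structure is recoverable from the lattice.

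\emph{Easy direction.} Suppose $C=C_1\oplus C_2$ with disjoint nonempty supports $S_1\sqcup S_2=\{1,\dots,n\}$. Then each $C_i$ is self-dual on $S_i$. Directly from the definition~\eqref{eq:defLm},
\[
\mathcal A_3(C)=\mathcal A_3(C_1)\perp\mathcal A_3(C_2),
\]
since $\zz\bmod 3\in C$ exactly when its restrictions to $S_1$ and $S_2$ lie in $C_1$ and $C_2$. Both summands are nonzero, so $\mathcal A_3(C)$ is decomposable.

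\emph{Converse.} Suppose $\mathcal A_3(C)=M_1\perp M_2$ with both $M_i\neq0$. Consider the vectors $\vv_i=\sqrt3\,\e_i\in\mathcal A_3(C)$, each of norm $3$. I claim each $\vv_i$ lies in $M_1$ or in $M_2$. Write $\vv_i=\x+\y$ with $\x\in M_1$, $\y\in M_2$. Both $\x$ and $\y$ lie in $\mathcal A_3(C)$, and $3=\|\x\|^2+\|\y\|^2$ with integral norms. So if both are nonzero, one of them, say $\x$, has norm $1$ or $2$ (norms $1$ and $2$ together, up to order).

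Now use the minimum weight. A vector $\zz/\sqrt3$ with $\zz\bmod3=\cc\in C$ and $\cc\neq0$ has norm at least $\wt(\cc)/3\ge 2$, with equality only when $\wt(\cc)=6$ and $\zz=\widetilde\cc$. A vector with $\cc=0$ has norm at least $3$. Hence norm $1$ is impossible, so $\x$ (or $\y$) cannot have norm $1$. Thus the only possible splitting has norms $2$ and $1$, which cannot occur. Therefore each $\vv_i$ lies in exactly one of $M_1,M_2$.

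This partitions the coordinates into $S_1=\{i:\vv_i\in M_1\}$ and $S_2=\{i:\vv_i\in M_2\}$. Since $3\mathbb Z^n/\sqrt3=\langle \vv_i\rangle$ has full rank, any $\x\in M_1$ is orthogonal to every $\vv_i$ with $i\in S_2$, so $\x$ is supported on $S_1$; likewise for $M_2$. Both $M_i\ne0$, so both $S_i$ are nonempty. Each $\zz\in\mathcal A_3(C)$ then splits as the sum of its restrictions to $S_1$ and $S_2$, both lying in the lattice. Reducing mod $3$, every $\cc\in C$ has its restrictions to $S_1$ and $S_2$ in $C$, so $C=C|_{S_1}\oplus C|_{S_2}$ is decomposable.

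The main obstacle is the norm bookkeeping showing that no $\vv_i$ splits. This requires the careful lower bound $\|\zz/\sqrt3\|^2\ge\wt(\zz\bmod 3)/3$ for $\zz\bmod3\ne0$, and $\ge3$ otherwise; that bound is exactly where $d\ge6$ is used to exclude norm-$1$ vectors.
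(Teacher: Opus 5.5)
Your proof is correct. The paper gives no argument of its own for this lemma; it quotes it from Harada--Munemasa. It uses only the implication ``$C_m$ indecomposable $\Rightarrow$ $L_m$ irreducible'' in the proof of Theorem~\ref{thm:main lattice}. That is exactly your converse, so your write-up is a self-contained justification of the direction the paper relies on.

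The mechanism you isolate is the right one. Each frame vector $\sqrt3\,\e_i$ has norm $3$, and splitting one orthogonally would require a lattice vector of norm $1$. Such vectors are exactly the lifts $\widetilde\cc/\sqrt3$ of weight-$3$ words, and $d\ge6$ excludes them. This also shows the hypothesis cannot simply be dropped: the tetracode is an indecomposable self-dual $[4,2,3]$ code, yet its Construction~A lattice is $\mathbb Z^4$. Your converse never uses unimodularity, only integrality, $d\ge6$, and full support of $C$.

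A few things to tidy:
\begin{itemize}
\item Your opening sentence claims that $d\ge6$ forces the minimal vectors to be the $\sqrt3\,\e_i$ together with lifts of weight-$9$ words. This is false when $d=6$: the minimum norm is then $2$, attained by $\widetilde\cc/\sqrt3$ with $\wt(\cc)=6$, as your own later bound shows. Nothing in the argument depends on it, so just delete it.
\item At the end, say why both $C|_{S_1}$ and $C|_{S_2}$ are nonzero. If every codeword vanished on the nonempty set $S_2$, then for $j\in S_2$ we would get $\e_j\in C^\perp=C$, contradicting $\e_j\cdot\e_j=1$.
\item The sentence ``Thus the only possible splitting has norms $2$ and $1$, which cannot occur'' is cleaner as: if $\x$ and $\y$ were both nonzero, one of them would have norm $1$, which is impossible.
\end{itemize}
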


\section{A ternary code seed}\label{sec:seed}

We start from an extremal ternary self-dual $[24,12,9]$ code $C$.
Choose a word $\w\in C$ of weight $9$. After a monomial transformation, we
may assume
$ \w=\mathbf1_S$ and $|S|=9.$
Fix any partition $S=P\sqcup Q$ with $|P|=4$ and $|Q|=5$, and put
\begin{equation*}
	\p=-\mathbf1_P,
	\qquad
	\q=-\mathbf1_Q.
\end{equation*}
Then
\begin{equation*}
	\w+\p+\q=0,
	\quad \p\cdot \p=1,
	\quad \q\cdot \q=2,
	\quad \p\cdot \q=0,
	\quad \w\cdot \p=2.
\end{equation*}
Define
\begin{equation*}
	D=C\cap \p^\perp,
	\qquad
	D_9=\{\dd\in D:\wt(\dd)=9\}.
\end{equation*}
For a family of codewords, its \emph{support graph} has the coordinate
positions as vertices, with two coordinates adjacent when they occur together
in the support of one member of the family.

\begin{theorem}\label{thm:seed}
	For every choice of $C,\w,P,Q$ above,
	the following statements hold:
	\begin{enumerate}[label=\textnormal{(\roman*)}]
		\item $\dim D=11$, $C=D\oplus\langle \w\rangle$, and $D\perp \q$;
		\item $D=\langle D_9\rangle$;
		\item the support graph of $D_9$ is connected.
	\end{enumerate}
\end{theorem}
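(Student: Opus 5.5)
Part (i) is linear algebra. Since $C$ is self-dual and $\w\cdot\p=2\neq0$, the functional $\x\mapsto\x\cdot\p$ is nonzero on $C$. Hence $D=C\cap\p^\perp$ has dimension $11$ and $\w\notin D$, which gives $C=D\oplus\langle\w\rangle$. For $\dd\in D$ we have $\q=-\w-\p$, so $\dd\cdot\q=-\dd\cdot\w-\dd\cdot\p=0$, because $\w\in C=C^\perp$. Dualizing, $D^\perp=C+\langle\p\rangle$, a $[24,13]$ code. Every nonzero word of $D^\perp$ has weight at least $5$: either it lies in $C$, or it has the form $\cc\pm\p$ with $\wt(\cc)\ge 9$ and $\wt(\p)=4$ (and for $\cc=0$ it is $\pm\p$, of weight $4$). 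In particular $D$ has no identically zero coordinate.

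For (ii), put $E=\langle D_9\rangle\subseteq D$; the task is to rule out $\dim E\le10$. First I would compute $|D_9|$ using the $5$-$(24,9,6)$ design and \eqref{eq:lambdas}. A weight-$9$ word $\cc$ with support $B$ lies in $D$ iff $\sum_{i\in P\cap B}c_i=0$.

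\begin{itemize}[label=--]
\item Blocks disjoint from $P$ number, by inclusion--exclusion, $2024-4\cdot759+6\cdot264-4\cdot84+24=260$, and they give $520$ words of $D_9$.
\item Blocks with $|B\cap P|=1$ give none.
\item Blocks with $|B\cap P|\in\{2,3,4\}$ contribute according to the sign pattern on $P$. The block counts for each intersection size follow from the $\lambda_j$ (size $4$ has $\lambda_4=24$ blocks). The sign patterns I would pin down using the restriction of $C$ to $P\cup\{\text{few points}\}$, or alternatively by applying \eqref{eq:macwilliams} to $D$ itself, whose dual $C+\langle\p\rangle$ has a weight enumerator computable from that of $C$ together with $\p$.
\end{itemize}

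Suppose instead $\dim E=r\le10$. Then $E^\perp\supseteq C+\langle\p\rangle$ and $E$ is a self-orthogonal $[24,r,9]$ code with all weights divisible by $3$ and $A_9(E)=|D_9|$. I would feed \eqref{eq:macwilliams} with $j=0,\dots,5$, imposing nonnegativity of the unknowns $A_{12},\dots,A_{24}$ and the bounds $B_j\ge A_j(C+\langle\p\rangle)$. The aim is to show that these constraints are infeasible, for instance because $B_1$ or $B_2$ would be forced negative or non-integral. This linear-programming/MacWilliams step is the main obstacle. If it does not close directly, my fallback is more structural. The $520$ words supported off $P$ lie in the shortened code $C_P=\{\cc\in C:\cc|_P=0\}$, which has dimension $8$ because $P$ is an information-independent set (the dual distance is $9>4$). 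I would first show that these words span $C_P$, which is the same argument one level down. Then I would check that the words of $D_9$ meeting $P$ supply the remaining $3$ dimensions modulo $C_P$.

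For (iii), I would deduce connectivity from (ii). For two points $x,y\notin P$, inclusion--exclusion over blocks through $\{x,y\}$ that avoid $P$ gives at least $264-4\cdot84+6\cdot24-4\cdot6=48>0$ such blocks, once one discards the nonnegative $6$-set term. So any two of the $20$ points outside $P$ are directly adjacent via one of the $520$ words above. Each point of $P$ lies in the support of some word of $D_9$, since $D=\langle D_9\rangle$ has no zero coordinate by (i). Such a word has weight $9>4$, so its support also meets the complement of $P$. Hence every vertex is joined to the big component, and the support graph is connected.
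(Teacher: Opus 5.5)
Part (ii) is the heart of the theorem, and your proposal does not contain an argument for it. You name the right tool (MacWilliams identities and linear programming for $E=\langle D_9\rangle$), but the sketch cannot close as written.

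The first missing ingredient is a lower bound on $|D_9|=A_9(E)$. Any $10$-dimensional subcode of $D$ satisfies all the other constraints you list, so a contradiction can only come from knowing that $|D_9|$ is large. Your plan to compute $|D_9|$ exactly from sign patterns on $P$ cannot work uniformly. The value depends on the choice of $C,\w,P,Q$: by Remark~\ref{rem:four-lattice-types}, the values $1372$, $1378$, $1384$, $1396$ all occur. For the same reason, the weight enumerator of $C+\langle\p\rangle$ is not determined by that of $C$, since its number of weight-$5$ words is $(a-1360)/3$.

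What works, and what the paper does, is to apply \eqref{eq:macwilliams} to $D$ itself, using only the low-weight data of $D^\perp$: $B_0=1$, $B_1=B_2=B_3=0$, $B_4=2$. (Since $\pm\p$ have weight $4$, your claim that $D^\perp$ has minimum weight $5$ is a slip.) The identities for $j\le4$ express $A_{12}(D),\dots,A_{24}(D)$ in terms of $a=|D_9|$. The identity for $j=5$ then gives $3B_5=a-1360$, whence $|D_9|\ge1360$.

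Two further ingredients are missing.
\begin{enumerate}[label=(\alph*)]
\item You need $E$ itself, not just $D$, to have full support, so that $A_1(E^\perp)=0$. The paper gets this from the design. For $j\notin P$ there are $117$ blocks through $j$ avoiding $P$. For $j\in P$ there are $69-\nu_j\ge63$ blocks through $j$ avoiding $Q$, and their words satisfy $\cc\cdot\p=-\cc\cdot\w-\cc\cdot\q=0$.
\item You need an explicit dual certificate; it is not a matter of $B_1$ or $B_2$ being forced negative. The paper takes $F=(1532+404K_1-53K_5-11K_6)/228906$. It has $F(9)=1$, $F(12)=F(15)=F(18)=0$, $F(21),F(24)>0$ and $F(0)=-114418/157$. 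Combined with $A_1(E^\perp)=0$ and $A_5(E^\perp),A_6(E^\perp)\ge0$, this yields $A_9(E)\le1532\cdot3^r/228906+114418/157<1360$ whenever $r\le10$. Note that it uses $j=6$, outside the range $j\le5$ you proposed.
\end{enumerate}
Your structural fallback has the same difficulty one level down. Proving that the $520$ words avoiding $P$ span the $8$-dimensional shortened code is again a lower-bound-plus-certificate problem. The claim that words meeting $P$ supply the remaining three dimensions is not argued.

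Your part (i) is fine apart from the slip noted above. Your argument for (iii) is correct and differs from the paper's. The paper uses (ii) and the Singleton bound: a disconnected support graph would split $D$ into $t\ge2$ codes of minimum distance at least $9$, giving $13\ge8t\ge16$. You instead show directly that the $20$ points off $P$ form a clique, via the count $48+\nu\ge48$, and then attach each point of $P$ to it. This needs only that $\langle D_9\rangle$ has full support. So, combined with the design count in (a), your route proves (iii) without relying on (ii).
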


\begin{proof}
	The functional $\cc\mapsto \cc\cdot \p$ is nonzero on $C$, since
	$\w\cdot \p=2$.
	Hence its kernel $D$ has dimension $11$, and
	$C=D\oplus\langle \w\rangle$. Moreover, $D\perp \p$, and since
	$\q=-\w-\p$ and $D\subseteq C=C^\perp$, also $D\perp \q$.
	This proves (i).

	Put $E=\langle D_9\rangle$. We first show that $E$ has full support. If
	$j\notin P$, inclusion--exclusion in the $5$-$(24,9,6)$ design shows that
	the number of blocks containing $j$ and avoiding $P$ is
	\[
		\lambda_1-4\lambda_2+6\lambda_3-4\lambda_4+\lambda_5
		=759-4\cdot264+6\cdot84-4\cdot24+6=117,
	\]
	by \eqref{eq:lambdas}.
	Every corresponding weight-$9$ word lies in $D_9$.
	If $j\in P$, let $\nu_j$ be the number of blocks containing the $6$-set
	$\{j\}\cup Q$. The number of blocks containing $j$ and avoiding $Q$ is
	\[
		\lambda_1-5\lambda_2+10\lambda_3-10\lambda_4+5\lambda_5-\nu_j
		=69-\nu_j\ge63,
	\]
	since $\nu_j\le\lambda_5=6$. For a word $\cc$ on such a block,
	$\cc\cdot \w=0$ and $\cc$ vanishes on $Q$, whence $\cc\cdot \p=0$.
	Thus every
	coordinate occurs in a word of $D_9$.

	We next establish a lower bound on $|D_9|$. Let
	$a=A_9(D)$ and $ B_j=A_j(D^\perp).$
	Since
	$
		D^\perp=C^\perp+\langle \p\rangle=C+\langle \p\rangle,
	$
	the only nonzero words of $D^\perp$ of weight at most $4$ are $\p$ and $-\p$.
	Indeed, if $\cc\in C$ and $\alpha\ne0$ satisfy
	$\wt(\cc+\alpha \p)\le4$, then $\wt(\cc)\le8$, so $\cc=0$.
	Hence
	\[
		B_0=1,
		\qquad B_1=B_2=B_3=0,
		\qquad B_4=2.
	\]
	Since $D\subset C$, all weights in $D$ are divisible by $3$, and its nonzero weights are at least
	$9$. Substitution in the standard identities \eqref{eq:macwilliams} for
	$j=0,1,2,3,4$ gives
	\[
		A_{12}(D)=27362-5a,~
		A_{15}(D)=67400+10a,~
		A_{18}(D)=79748-10a,\]
	\[
		A_{21}(D)=1240+5a,~
		A_{24}(D)=1396-a.
	\]
	The equation for $j=5$ then gives
	$ B_5=\frac{a-1360}{3}.$
	Consequently
	\begin{equation}\label{eq:D9lower}
		|D_9|=a\ge1360.
	\end{equation}

	It remains to show that $\dim E=11$.
	We use the following Delsarte-type linear-programming argument \cite{Delsarte}.
	Set
	\begin{equation}\label{eq:fi}
		F(i)=\frac{1532+404K_1(i)-53K_5(i)-11K_6(i)}{228906}.
	\end{equation}
	Direct evaluation from \eqref{eq:krawtchouk} gives
	\begin{equation}\label{eq:Fvalues}
		\begin{array}{c|rrrrrrr}
			i    & 0                    & 9 & 12 & 15 & 18 & 21           & 24                \\[0.3ex]
			\hline                                                                            \\[-1.5ex]
			F(i) & -\dfrac{114418}{157} & 1 & 0  & 0  & 0  & \dfrac1{314} & \dfrac{524}{157}.
		\end{array}
	\end{equation}
	Let $r=\dim E$ and write $B_j'=A_j(E^\perp)$. Since $E$ has full support, $B_1'=0$.
	Since the nonzero weights of $E$ belong to
	$\{9,12,15,18,21,24\}$, \eqref{eq:Fvalues} gives
	\[
		\sum_{i>0}A_i(E)F(i)
		=
		A_9(E)+\frac{1}{314}A_{21}(E)
		+\frac{524}{157}A_{24}(E)
		\ge A_9(E).
	\]
	On the other hand, by \eqref{eq:fi}, the MacWilliams identities \eqref{eq:macwilliams} yield
	\begin{align*}
		\sum_{i=0}^{24}A_i(E)F(i)
		=
		\frac{3^r}{228906}
		\bigl(1532B_0'+404B_1'-53B_5'-11B_6'\bigr)\le
		\frac{1532\cdot3^r}{228906},
	\end{align*}
	because $B_0'=1$, $B_1'=0$, and $B_5',B_6'\ge0$.
	Since $A_0(E)=1$ and
	$F(0)=-114418/157$, it follows that
	\[
		A_9(E)
		\le
		\sum_{i>0}A_i(E)F(i)
		\le
		\frac{1532\cdot3^r}{228906}
		+\frac{114418}{157}.
	\]
	Note that $A_9(E)=|D_9|\ge1360$ by \eqref{eq:D9lower}, and when $r\leq 10$, $A_9(E)<1360$.
	Thus
	$r\ge11$.
	Since $E\subseteq D$ and $\dim D=11$, we have $E=D$. This proves (ii).

	For (iii), if the support graph had $t\ge2$ components, generation would give
	$D=\bigoplus_{i=1}^t D_i$, with nonzero $[n_i,k_i,d_i]_3$ codes satisfying
	$d_i\ge9$. The Singleton bound gives
	\[
		13=24-11=\sum_i(n_i-k_i)\ge\sum_i(d_i-1)\ge8t\ge16,
	\]
	a contradiction.
	This completes the proof.
\end{proof}

\begin{lemma}\label{lem:local}
	For $a,b\in\F$, put
	\(
	\mu(a,b)=\min\{\wt(\dd+a\q+b\p):\dd\in D\}.
	\)
	With rows and columns indexed by $0,1,2$,
	the following inequality holds entrywise:
	\begin{equation}\label{eq:local}
		(\mu(a,b))_{a,b\in\F}\ge
		\begin{pmatrix}
			0 & 4 & 4 \\
			5 & 9 & 6 \\
			5 & 6 & 9
		\end{pmatrix}
	\end{equation}
\end{lemma}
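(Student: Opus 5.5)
The plan is to reduce every entry of \eqref{eq:local} to a statement about a single codeword of $C$ shifted by a multiple of $\p$. We use only the minimum weight $9$ of $C$ and the relations $\w+\p+\q=0$ and $\w\cdot\p=2$. Since $\q=-\w-\p$, we have
\[
\dd+a\q+b\p=(\dd-a\w)+(b-a)\p .
\]
Put $\cc=\dd-a\w\in C$ and $\gamma=b-a$. By \Cref{thm:seed}(i), $C=D\oplus\langle\w\rangle$, so $\cc\ne0$ whenever $a\ne0$. We also have $\cc\cdot\p=\dd\cdot\p-a\,\w\cdot\p=-2a=a$.

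The basic estimate is about adding $\gamma\p=-\gamma\mathbf1_P$ to $\cc$. This changes only the four coordinates in $P$, and it kills coordinate $j\in P$ exactly when $\cc_j=\gamma$. Hence
\[
\wt(\cc+\gamma\p)\ \ge\ \wt(\cc)-\bigl|\{j\in P:\cc_j=\gamma\}\bigr| .
\]
I would then go through the entries of the matrix in \eqref{eq:local}.

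\emph{Row $a=0$.} The entry $(0,0)$ is trivial. For $b\ne0$, either $\dd=0$, giving weight $\wt(\p)=4$, or $\wt(\dd)\ge9$, giving weight at least $9-4=5$.

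\emph{Diagonal $a=b\ne0$.} Here $\gamma=0$, so the vector is $\cc\in C\setminus\{0\}$ and has weight at least $9$.

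\emph{Entries $(a,0)$ with $a\ne0$.} Here $\cc\ne0$, so the estimate gives at least $9-4=5$.

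\emph{Entries $(1,2)$ and $(2,1)$.} These are the only delicate ones, since the bound needed is $6$ rather than $5$. If $\wt(\cc)\ge12$, the estimate already gives at least $8$. If $\wt(\cc)=9$, it suffices to show that $\cc_j=\gamma$ cannot hold for all four $j\in P$. Suppose it did. Then
\[
\cc\cdot\p=-\sum_{j\in P}\cc_j=-4\gamma=-\gamma .
\]
Comparing with $\cc\cdot\p=a$ gives $\gamma=-a$, that is, $b=0$. This contradicts $b\ne0$. So at most three coordinates of $P$ cancel, and the weight is at least $9-3=6$.

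The main obstacle is exactly this last case. I would handle it through the inner-product identity $\cc\cdot\p=a$, which is where the hypothesis $\w\cdot\p=2$ enters. Everything else follows directly from $d(C)=9$ and $|P|=4$.
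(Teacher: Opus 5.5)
Your proof is correct, and its skeleton matches the paper's. In both, $\q=-\w-\p$ turns $\dd+a\q+b\p$ into a nonzero word $\cc=\dd-a\w$ of $C$ (for $a\neq0$) plus a multiple of $\p$, and then $d(C)=9$ is combined with $\wt(\p)=4$. In the delicate case $(a,b)=(1,2)$, the paper's identity $\uu-\p=\dd-\w$ is exactly your $\uu=\cc+\gamma\p$ with $\gamma=1$.

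The two arguments differ only in how the entries $(1,2)$ and $(2,1)$ are pushed from $5$ to $6$. The paper uses a global congruence. Since $D$ is self-orthogonal and orthogonal to $\p$ and $\q$, one has $\wt(\uu)\equiv\uu\cdot\uu\equiv2a^2+b^2\pmod 3$, which is $0$ when $a=-b\neq0$. So the bound $\wt(\uu)\ge\wt(\uu-\p)-4\ge5$ upgrades to $\wt(\uu)\ge6$. You instead argue locally on $P$: only coordinates $j\in P$ with $\cc_j=\gamma$ can cancel, and the identity $\cc\cdot\p=a$ (coming from $\w\cdot\p=2$) rules out all four cancelling.

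The congruence is uniform: it fixes the residue mod $3$ of every weight in each coset $D+a\q+b\p$, and the paper also cites it for the first row and column. Your argument needs only one inner product and shows precisely where cancellation can occur.

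Incidentally, your ``at most three cancellations'' step never uses $\wt(\cc)=9$. It therefore gives $\wt(\cc+\gamma\p)\ge\wt(\cc)-3\ge6$ directly, and the case split on $\wt(\cc)$ can be dropped. That split also tacitly relied on weights in $C$ being multiples of $3$.
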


\begin{proof}
	Let $\uu=\dd+a\q+b\p$ with $\dd\in D$.
	Since $D$ is orthogonal to both $\p$ and $\q$, and $D$ is
	self-orthogonal,
	\begin{equation}\label{eq:weightcong}
		\wt(\uu)\equiv \uu\cdot \uu\equiv2a^2+b^2\pmod3.
	\end{equation}
	The first row and column follow from $\wt(\p)=4$, $\wt(\q)=5$,
	$d(C)=9$, and \eqref{eq:weightcong}. If $a=b\ne0$, then
	$a\q+b\p=-a\w$, so $\dd-a\w$ is a nonzero word of $C$ and has weight at least $9$.
	If $a=-b\ne0$, scalar multiplication reduces to $(a,b)=(1,2)$. For
	$\uu=\dd+\q-\p$, the word $\uu-\p=\dd-\w$ lies nontrivially in $C$, so
	$\wt(\uu)\ge9-4=5$; by \eqref{eq:weightcong} its weight is divisible by $3$,
	and hence is at least $6$.
\end{proof}

\section{Construction of the lattices $L_m$}\label{sec:lattice-graph}

For every integer $m\ge 1$, we first construct an indecomposable
ternary self-dual $[24m,12m,9]$ code $C_m$ generated by its minimum-weight
codewords.

Fix $C,\w,P,Q,\p,\q,D$ as in \cref{sec:seed}.
For $m\ge1$, index $m$ blocks of
length $24$ by $i\in\Zcyc m$.
For $\vv\in\F^{24}$, write $\vv^{(i)}$ for the
vector supported on block $i$ with value $\vv$. Define
\begin{equation}\label{eq:defCm}
	\x_i=\p^{(i)}+\q^{(i+1)},
	\qquad
	X_m=\langle \x_i:i\in\Zcyc m\rangle,
	\qquad
	C_m=D^{\oplus m}+X_m.
\end{equation}
For $m=1$, $\x_0=\p+\q=-\w$, so $C_1=C$.

\begin{theorem}\label{thm:Cm}
	For every $m\ge1$, $C_m$ is an indecomposable ternary self-dual code with
	parameters $[24m,12m,9]$.
	It is generated by weight-$9$ words whose support
	graph is connected.
\end{theorem}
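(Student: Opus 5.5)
The plan is to prove the four properties of $C_m$ in order: self-duality with the right dimension, minimum weight $9$, generation by weight-$9$ words with connected support graph, and indecomposability.

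\emph{Self-duality and dimension.} By \cref{thm:seed}(i), $D$ is self-orthogonal and orthogonal to both $\p$ and $\q$. Hence $D^{\oplus m}$ is orthogonal to every $\x_i$. For the generators among themselves:
\[
\x_i\cdot\x_i=\p\cdot\p+\q\cdot\q=1+2=0 .
\]
For $m\ge3$ and $j\ne i$, the vectors $\x_i$ and $\x_j$ share a block only through $\p\cdot\q=0$. For $m=2$ they share both blocks, giving $2\,\p\cdot\q=0$. So $C_m$ is self-orthogonal.

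For the dimension, project $C_m$ onto $(\F^{24}/D)^m$. Since $\p,\q\notin D$, the images of $\p$ and $\q$ are independent in $\F^{24}/D$. Indeed, $\F^{24}/D\supseteq C/D$ has $\p\cdot$-functional considerations, and $a\q+b\p\in D$ forces $a=b=0$ by \cref{lem:local}, since $\mu(a,b)>0$ otherwise. Hence the $\x_i$ map independently, because each $\x_i$ contributes a distinct $\p$-component in block $i$. This gives
\[
\dim C_m=11m+m=12m,
\]
so $C_m$ is self-dual.

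\emph{Minimum weight.} Write a general codeword as $\uu=\sum_i \uu_i^{(i)}$ with $\uu_i=\dd_i+a_{i-1}\q+b_i\p$, where $a_{i-1}=b_{i-1}$ are the coefficients of $\x_{i-1}$. Then
\[
\wt(\uu)=\sum_i\wt(\uu_i)\ge\sum_i\mu(c_{i-1},c_i),
\]
where $c_i$ is the coefficient of $\x_i$. Now use \cref{lem:local} on this cyclic sequence $(c_i)\in\F^m$.
\begin{itemize}[label=--]
\item If all $c_i=0$, then $\uu\in D^{\oplus m}$ and some nonzero block has weight $\ge9$.
\item If the $c_i$ are not all equal, there is an index where $c$ goes from $0$ to nonzero (contributing $\ge4$) and one from nonzero to $0$ (contributing $\ge5$). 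This gives total $\ge9$; if there is no zero at all, some transition $a\ne b$ contributes $\ge6$, and there are at least two such, giving $\ge12$.
\item If all $c_i=c\ne0$, each block contributes $\ge9$.
\end{itemize}
Weight $9$ is attained by words of $D_9$. The main obstacle is making this cyclic case analysis airtight, in particular checking that the matrix bounds sum to at least $9$ around every closed walk in $\F$ other than the constant-zero one with nonzero $\dd$.

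\emph{Generation and connectivity.} By \cref{thm:seed}(ii), $D^{\oplus m}$ is generated by weight-$9$ words, and within each block these have connected support by \cref{thm:seed}(iii). It remains to generate each $\x_i$ modulo $D^{\oplus m}$ by weight-$9$ words. I would take a word
\[
\p^{(i)}+\q^{(i+1)}+\dd^{(i)}+\dd'^{(i+1)}
\]
of weight $4+5=9$, i.e.\ $\x_i$ itself, which already has weight $9$. So $C_m$ is generated by weight-$9$ words. The support of $\x_i$ meets blocks $i$ and $i+1$, which links consecutive block components, so the whole support graph is connected.

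\emph{Indecomposability.} If $C_m=C'\oplus C''$ on complementary coordinate sets, then every minimum-weight word lies in one summand, because a word split across both would have each part in a summand and so not be minimal-weight unless one part is $0$. Connectivity of the support graph of the generating weight-$9$ words then forces one summand to be trivial. (Alternatively, this gives lattice irreducibility later via \cref{lem:HM}, since $d=9\ge6$.)
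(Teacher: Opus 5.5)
Your proposal is correct and follows essentially the same route as the paper. Both arguments verify self-orthogonality directly, and both count dimensions by showing the bridges are independent modulo $D^{\oplus m}$. Both obtain $d(C_m)=9$ from the cyclic transition-cost argument with the matrix of \cref{lem:local}, get generation from block-local copies of $D_9$ together with the weight-$9$ bridges $\x_i$, and deduce indecomposability from connectivity of the support graph.

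The only difference is in the dimension step. You deduce that $a\q+b\p\in D$ forces $a=b=0$ from the positivity of $\mu(a,b)$ in \cref{lem:local}. The paper instead pairs the block-$i$ component $\alpha_i\p+\alpha_{i-1}\q$ with $\p$, using $D\perp\p$, $\p\cdot\p=1$ and $\p\cdot\q=0$, to get $\alpha_i=0$. Both arguments are valid.
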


\begin{proof}
	The code $D^{\oplus m}$ is self-orthogonal and is orthogonal to every bridge.
	Moreover,
	$ \x_i\cdot \x_i=0$,
	and distinct bridges are orthogonal.
	Hence $C_m$ is self-orthogonal.

	If $\sum_i\alpha_i \x_i\in D^{\oplus m}$, then its block-$i$ component is
	$\alpha_i \p+\alpha_{i-1}\q$.
	Taking its inner product with $\p$ gives
	$\alpha_i=0$.
	Thus the $m$ bridges are independent and
	$X_m\cap D^{\oplus m}=0$.
	Therefore
	\(
	\dim C_m=11m+m=12m,
	\)
	so $C_m$ is self-dual.

	Write a word of $C_m$ as
	\(
	\zz=(\dd_i)_{i\in\Zcyc m}+\sum_i\alpha_i \x_i.
	\)
	Its block-$i$ component is
	\(
	\zz_i=\dd_i+\alpha_{i-1}\q+\alpha_i \p.
	\)
	The weight of $\zz$ satisfies
	$\wt(\mathbf z)=\sum_i\wt(\mathbf z_i)\geq\sum_i\mu(\alpha_{i-1},\alpha_i).$
	Therefore, the matrix \eqref{eq:local} assigns a lower-bound cost to each transition
	$\alpha_{i-1}\to\alpha_i$ in the cyclic state sequence. If all states are
	zero, every nonzero word has weight at least $9$. If both zero and nonzero
	states occur, every maximal nonzero run has entrance cost at least $4$ and
	exit cost at least $5$. If all states are nonzero, either they are all equal,
	in which case a transition already costs at least $9$, or both $1$ and $2$
	occur, in which case the cycle has at least two transitions of cost at least
	$6$. Hence $d(C_m)\ge9$. Each bridge has weight $9$, so $d(C_m)=9$.

	By Theorem~\ref{thm:seed}, the block-local copies of $D_9$, together with the
	bridges, generate $C_m$; all have weight $9$. Within each block their support
	graph is connected, and every bridge joins two consecutive blocks. Thus the
	full support graph is connected.

	Finally, suppose that $C_m$ decomposes over a nontrivial coordinate partition
	as $U\oplus V$. Since $d(C_m)=9$, a weight-$9$ word cannot have nonzero
	projections in both summands.
	This contradicts the connectedness of the support graph.
	Thus $C_m$ is indecomposable.
\end{proof}

We now prove Theorem~\ref{thm:main lattice} by taking $L_m:=\mathcal{A}_3(C_m)$ to be
the Construction~A lattice associated with $C_m$.
By~\eqref{eq:defLm}, $L_m$ is $3$-integrable.

\begin{proof}[Proof of Theorem~\ref{thm:main lattice}]
	The Construction~A properties recalled in Section~2 show that $L_m$
	is unimodular of rank $24m$ and minimum norm $3$.
	Lemma~\ref{lem:HM} gives irreducibility, and a vector of norm $3$
	shows that $L_m$ is odd.

	The vectors $\sqrt3\e_j$ are minimal and generate $\sqrt3\mathbb Z^{24m}$.
	For every weight-$9$ word $\cc\in C_m$, its canonical lift
	$\widetilde \cc/\sqrt3$ is also minimal. Since the weight-$9$ words generate
	$C_m$ and
	$ L_m/\sqrt3\mathbb Z^{24m}\cong C_m,$
	these minimal vectors generate $L_m$.
	This completes the proof.
\end{proof}

\begin{remark}\label{rem:four-lattice-types}
A computer-assisted classification shows that, for each fixed
$m\ge2$, our construction yields exactly four isometry classes
of lattices, listed in Table~\ref{tab:four-lattice-types}.
	Put
	$a:=A_9(D)$ and $
	b:=\bigl|\{\cc\in D+\q:\wt(\cc)=5\}\bigr|.$
	Their kissing numbers are given by
	\[
	|\Min(L_m)|=48m+A_9(C_m)=m(48+a+2b),
	\]
	and distinguish the four classes.
	For $m=1$, all choices give the odd Leech lattice.
\end{remark}

\begin{table}[H]
	\centering
	\caption{The four lattice isometry classes arising from the
		extremal $4+5$ construction for each fixed $m\ge2$.}
	\label{tab:four-lattice-types}
	\begin{tabular}{@{}crrcl@{}}
		\toprule
		Type & $a=A_9(D)$ & $b$ & $|\Min(L_m)|$ & Starting code $C$ \\
		\midrule
		I   & $1372$ & $2$ & $1424m$ & $Q_{24}$ or $P_{24}$ \\
		II  & $1378$ & $3$ & $1432m$ & $Q_{24}$ or $P_{24}$ \\
		III & $1384$ & $4$ & $1440m$ & $Q_{24}$ or $P_{24}$ \\
		IV  & $1396$ & $6$ & $1456m$ & $P_{24}$ \\
		\bottomrule
	\end{tabular}
\end{table}

All lattices constructed here are $3$-integrable.
This suggests the following question.

\begin{question}\label{ques:bounded-integrability}
	Does there exist a positive integer $t$ such that every integral
	lattice generated by vectors of norm $3$ is $t$-integrable?
\end{question}

\section{Non-extendable signed graphs}\label{sec:non-extendable}

A \emph{signed graph} is a finite simple graph $G$ together with a
map $\sigma:E(G)\to\{\pm1\}$. Its signed adjacency matrix
$A=A(G,\sigma)$ has entry $\sigma(\{x,y\})$ when $x$ and $y$ are
adjacent, and $0$ otherwise. Its eigenvalues are those of $A$.
Switching replaces $A$ by $TAT$, where $T$ is diagonal with entries
in $\{\pm1\}$. We consider switching classes up to vertex relabelling.
Following \cite[Definition~1.4]{CKLY}, a connected signed graph with
smallest eigenvalue $-3$ is \emph{non-extendable} if it is not a proper
induced signed subgraph of any connected signed graph with smallest
eigenvalue at least $-3$.

We prove Corollary~\ref{cor:main} by constructing $\Gamma_m$ from one
representative of each antipodal pair of minimal vectors of $L_m$.
To prove non-extendability, we use a projection argument similar to
that in the proof of \cite[Lemma~2.2]{CKLY}.

\begin{proof}[Proof of Corollary~\ref{cor:main}]
	Choose one representative from each antipodal pair
	$\{\vv,-\vv\}\subseteq\Min(L_m)$, and denote the resulting set by $S_m$.
	For distinct $\uu,\vv\in S_m$, the vectors $\uu\pm\vv$ are nonzero, so
	\(
	3\le (\uu\pm\vv)\cdot(\uu\pm\vv)
	=6\pm2\uu\cdot\vv.
	\)
	Integrality therefore gives $\uu\cdot\vv\in\{0,\pm1\}$.
	Define a signed graph $\Gamma_m$ on $S_m$ by joining distinct vectors
	when their inner product is nonzero, with this inner product as the edge sign.
	Writing $A_m=A(\Gamma_m)$, we have
	\(
	3I+A_m=\operatorname{Gram}(S_m)\succeq0\).
	Since $S_m$ generates $L_m$, \( \rank(3I+A_m)=24m.\)
	There are $24m$ antipodal pairs of coordinate minimal vectors
	$\pm\sqrt3\e_j$, and at least one further pair arising from a
	weight-$9$ word of $C_m$. Hence $|S_m|>24m$, so $3I+A_m$ is singular
	and $\lambda_{\min}(\Gamma_m)=-3$.
	Different choices of representatives only switch $\Gamma_m$.
	Moreover, if $\Gamma_m$ were disconnected, the integer spans of its
	components would give a nontrivial orthogonal decomposition of $L_m$.
	Thus $\Gamma_m$ is connected.

	Suppose that a connected signed graph $\Gamma'$ properly contains
	$\Gamma_m$ as an induced signed subgraph and has smallest eigenvalue
	at least $-3$. Choose a Gram representation of $3I+A(\Gamma')$.
	Identify the span of the old Gram vectors isometrically with
	$V=\operatorname{span}_{\mathbb R}L_m$, so that the old vectors are
	precisely those of $S_m$.
	Since $\Gamma'$ is connected, some new vertex is adjacent to an old
	vertex. Let $\y$ be its Gram vector and let $\zz$ be its orthogonal
	projection onto $V$. Then $\zz\ne0$ and
	$\zz\cdot\vv=\y\cdot\vv\in\{0,\pm1\}$ for every $\vv\in S_m$.
	As $S_m$ generates $L_m$, we have $\zz\in L_m^*=L_m$.
	The minimum norm and the projection inequality give
	\[
		3\le\zz\cdot\zz\le\y\cdot\y=3.
	\]
	Equality forces $\y=\zz\in\Min(L_m)$, and hence $\y=\pm\vv$ for
	some $\vv\in S_m$. This gives the impossible off-diagonal Gram entry
	$\y\cdot\vv=\pm3$.
	Therefore $\Gamma_m$ is non-extendable.

	Finally, $\rank(A_m+3I)=24m$ is invariant under switching and vertex
	relabelling. Different values of $m$ therefore give pairwise distinct
	switching classes.
\end{proof}

\noindent{\bf Statement of AI use.}
ChatGPT 5.6 Sol initially provided a specific ternary self-dual $[24,12,9]$ code,
together with a computer-assisted argument showing that this code
gives rise to an infinite family of lattices.
Building on this, we generalized the construction to arbitrary
ternary self-dual $[24,12,9]$ codes and gave a proof that does not
rely on computer-assisted verification.
The authors take full responsibility for the mathematical
claims, proofs, and final presentation of the paper.

\end{document}